\documentclass[11pt]{article}

\usepackage[T1]{fontenc}
\usepackage{lmodern}
\usepackage[margin=1in]{geometry}
\usepackage{amsmath,amssymb,amsthm,mathtools}
\usepackage{tikz}
\usepackage{needspace}
\usepackage{microtype}
\usepackage[hidelinks]{hyperref}

\newtheorem{theorem}{Theorem}
\newtheorem{lemma}[theorem]{Lemma}
\theoremstyle{remark}
\newtheorem{remark}[theorem]{Remark}

\title{Congruent Triangular Faces, Reflections Allowed:\\
  Universal Realization and the Minimum Face Count in Problem B22}
\author{\small George M. Georgiou\thanks{\small School of Computer Science and Engineering, California State University, San Bernardino}\\[3pt]
\small \href{mailto:georgiou@csusb.edu}{georgiou@csusb.edu}}
\date{}

\begin{document}

\maketitle

\begin{abstract}
Problem B22 in \emph{Unsolved Problems in Geometry} asks which triangles occur
as the common face of a convex polyhedron, how many copies are needed, and how
they may be arranged.  We settle the existence and minimum-face-count questions
for triangles in the version that allows reflected copies; we do not classify
all attainable face counts, nor the possible arrangements.  Every
nondegenerate Euclidean triangle occurs: we
exhibit an explicit convex polyhedron, combinatorially an octahedron, all eight
of whose faces are congruent to a prescribed triangle.  We then determine the
minimum number of faces for \emph{every} triangle.  It is four for an acute
triangle; six for a right or obtuse isosceles triangle with side lengths
\((\lambda,\lambda,\beta)\) satisfying \(\lambda\sqrt2\leq\beta<\lambda\sqrt3\);
and eight in all remaining cases.
\end{abstract}

\section{The problem}

Problem B22 of Croft, Falconer, and Guy \cite{CFG} includes the following
question.

\medskip
\noindent
\emph{For which triangles \(T\) is there a convex polyhedron whose faces are
all congruent to \(T\), with or without allowing reflected copies?  When such
a polyhedron exists, how many copies are needed and how may they be arranged?}
\medskip

Problem B22 also asks the analogous question for congruent \(n\)-gons, which
we do not address.  This note treats the reflection-allowed version of the
triangular case.  Congruence below
therefore means ordinary Euclidean congruence, including orientation-reversing
isometries.  Remark~\ref{rem:chirality} shows that the construction of
Section~\ref{sec:eight} really does use both orientations, so it says nothing
about the orientation-preserving version of the problem.

\begin{theorem}[Universal realization]\label{thm:universal}
For every nondegenerate Euclidean triangle \(T\) there is a convex polyhedron
with exactly eight faces, each of them congruent to \(T\).
\end{theorem}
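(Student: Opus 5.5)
The plan is to look for the polyhedron as a \emph{centrally symmetric octahedron}, namely the convex hull of \(\pm u,\pm v,\pm w\) for three linearly independent vectors \(u,v,w\in\mathbb R^3\). Such a hull is the image of the regular octahedron \(\operatorname{conv}\{\pm e_1,\pm e_2,\pm e_3\}\) under the invertible linear map \(e_1\mapsto u\), \(e_2\mapsto v\), \(e_3\mapsto w\). So it is automatically a convex polyhedron with exactly eight triangular faces \(\operatorname{conv}\{s_1u,s_2v,s_3w\}\), \(s_i=\pm1\), and convexity costs nothing. By central symmetry, a face and its antipode are congruent. Each face has one edge from each of the pairs \(\{u,v\},\{v,w\},\{w,u\}\), of length \(|u\mp v|\), \(|v\mp w|\) or \(|w\mp u|\). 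Which sign occurs is governed by the products \(s_1s_2,s_2s_3,s_3s_1\), and their product is \(+1\). Up to antipodes there are therefore four face types: the edges are either all of ``minus'' type, or exactly one is of minus type.

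Let the triangle have sides \(a,b,c\). I would prescribe
\[
|u-v|=|u+v|=a,\qquad |v-w|=|w+u|=b,\qquad |v+w|=|w-u|=c .
\]
A direct check of the four face types gives side multisets \(\{a,b,c\}\), \(\{a,c,b\}\), \(\{a,b,c\}\), \(\{a,c,b\}\). Thus every face is congruent to \(T\) by SSS, possibly via a reflection.

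It remains to realize these six lengths by actual vectors. This is the one step where something could fail, so I expect it to be the main obstacle. By the parallelogram law the conditions translate into a Gram matrix:
\[
u\cdot v=0,\quad |u|^2=|v|^2=\tfrac{a^2}{2},\quad |w|^2=\tfrac{b^2+c^2-a^2}{2},\quad v\cdot w=\tfrac{c^2-b^2}{4},\quad w\cdot u=\tfrac{b^2-c^2}{4}.
\]
Explicitly, put \(u=\tfrac{a}{\sqrt2}e_1\), \(v=\tfrac{a}{\sqrt2}e_2\) and
\[
w=\Bigl(\tfrac{b^2-c^2}{2\sqrt2\,a},\ \tfrac{c^2-b^2}{2\sqrt2\,a},\ w_3\Bigr).
\]
Here \(w_3^2=|w|^2-\tfrac{(b^2-c^2)^2}{4a^2}\).

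The key verification is that this quantity is positive for \emph{every} nondegenerate triangle, with no acute/obtuse case split. Multiplying by \(4a^2\), the requirement becomes
\[
2a^2(b^2+c^2)-a^4-(b^2-c^2)^2>0 .
\]
The left side is exactly the Heron expression \(16\,\mathrm{Area}(T)^2\). Hence \(w_3=2\,\mathrm{Area}(T)/a>0\), the vectors \(u,v,w\) are linearly independent, and the octahedron is nondegenerate. This completes the construction, with explicit coordinates.

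Finally, I would note that the labelling of \(a\) is arbitrary, so any side may play the role of \(a\). I would also note that adjacent faces generally have opposite orientation, which is what the later Remark~\ref{rem:chirality} is about.
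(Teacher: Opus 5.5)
There is a genuine gap: your table of face types is wrong, and the error is fatal. Under your prescription the \(u\)--\(v\) edge of every face has length \(a\), so each face needs exactly one \(b\) and one \(c\) among its other two edges. But the \(v\)--\(w\) edge has length \(b\) exactly when it is of minus type, while the \(w\)--\(u\) edge has length \(b\) exactly when it is of plus type. So the two differ only when they are of the same type, which happens only in the two face types whose \(u\)--\(v\) edge is of minus type. Concretely, \(\operatorname{conv}\{-u,v,w\}\) has sides \(|u+v|=a\), \(|v-w|=b\), \(|w+u|=b\), and \(\operatorname{conv}\{u,-v,w\}\) has sides \(|u+v|=a\), \(|v+w|=c\), \(|w-u|=c\). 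Four of the eight faces are therefore \((a,b,b)\) or \((a,c,c)\), and the construction works only when \(b=c\). There is also an algebra slip in the realization step: \(4a^{2}w_{3}^{2}=2a^{2}(b^{2}+c^{2}-a^{2})-(b^{2}-c^{2})^{2}=16\,\mathrm{Area}(T)^{2}-a^{4}\), not Heron's expression. So positivity requires the altitude on side \(a\) to exceed \(a/2\), which fails for flat triangles. Already for the unit equilateral triangle \(w_{3}^{2}=\tfrac12\), not \(\tfrac34\).

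No relabelling repairs this; the centrally symmetric ansatz itself is the obstruction. Put \(p_{\pm}=|u\pm v|\), \(q_{\pm}=|v\pm w|\), \(r_{\pm}=|w\pm u|\). Up to antipodes the faces have side multisets \(\{p_-,q_-,r_-\}\), \(\{p_-,q_+,r_+\}\), \(\{p_+,q_-,r_+\}\), \(\{p_+,q_+,r_-\}\). If \(T\) is scalene, name its sides so that \(p_-=a\), \(q_-=b\), \(r_-=c\). The second face forces \(r_+\in\{b,c\}\) and the third forces \(r_+\in\{a,c\}\), so \(r_+=c\), and then \(q_+=b\) and \(p_+=a\). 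Hence \(|x+y|=|x-y|\) for each pair, so \(u,v,w\) are pairwise orthogonal. Then \(|u|^{2}=\tfrac12(a^{2}+c^{2}-b^{2})\), \(|v|^{2}=\tfrac12(a^{2}+b^{2}-c^{2})\), \(|w|^{2}=\tfrac12(b^{2}+c^{2}-a^{2})\) must all be positive, i.e.\ \(T\) must be acute. So no centrally symmetric octahedron realizes any right or obtuse scalene triangle, a whole class the theorem must cover.

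The missing idea is to abandon central symmetry. The paper places apices \((0,0,\pm h)\) over a zigzag equator \((\pm r,0,t)\), \((0,\pm r,-t)\). Its symmetry group \(D_{2d}\), generated by the improper rotation \(\sigma\) and the half-turn \(\rho\), contains no inversion but acts transitively on the eight faces, so only one face must be checked. With sides ordered \(a\leq b\leq c\), the parameters \(r,h,t\) come from the root \(X\in(0,b^{2})\) of \((b^{2}-X)(c^{2}-X)=M^{2}\), where \(M=\tfrac12(b^{2}+c^{2}-a^{2})\). This root exists for every nondegenerate triangle.
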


For a nondegenerate triangle \(T\), write \(m(T)\) for the least number of
faces of a convex polyhedron all of whose faces are congruent to \(T\).
Theorem~\ref{thm:universal} guarantees that \(m(T)\) is defined and that
\(m(T)\leq8\).

\begin{theorem}[The minimum face count]\label{thm:minimum}
Let \(T\) be a nondegenerate Euclidean triangle.  Then
\[
  m(T)=
  \begin{cases}
    4, & \text{if \(T\) is acute},\\[2pt]
    6, & \text{if \(T\) is isosceles with side lengths \((\lambda,\lambda,\beta)\)}\\
       & \text{\quad and \(\lambda\sqrt2\leq\beta<\lambda\sqrt3\)},\\[2pt]
    8, & \text{otherwise.}
  \end{cases}
\]
The three cases are exhaustive and mutually exclusive: an isosceles triangle
\((\lambda,\lambda,\beta)\) is acute exactly when \(\beta<\lambda\sqrt2\), and
the third case consists of the right and obtuse scalene triangles together
with the isosceles triangles satisfying \(\lambda\sqrt3\leq\beta<2\lambda\).
\end{theorem}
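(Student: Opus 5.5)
The plan has two halves: constructions giving the upper bounds, and combinatorial–angular obstructions giving the lower bounds. Theorem~\ref{thm:universal} already gives \(m(T)\le 8\) for every \(T\), so only the cases \(m(T)=4\) and \(m(T)=6\) need new constructions.

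\textbf{Constructions.} For an acute triangle with sides \(a,b,c\) I would take the disphenoid with vertices \((\pm p,\pm q,\pm r)\) having an even number of minus signs, where \(p^2+q^2=a^2/4\) and so on. Solving gives \(p^2=(a^2-b^2+c^2)/8\) and its cyclic analogues. These quantities are all positive exactly when \(T\) is acute, so all four faces are congruent to \(T\) and \(m(T)\le 4\).

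For the isosceles case \((\lambda,\lambda,\beta)\) with \(\lambda\sqrt2\le\beta<\lambda\sqrt3\) I would use a triangular bipyramid. Put an equilateral triangle of side \(\beta\) in the plane \(z=0\), with circumradius \(\beta/\sqrt3\), and place apexes at \((0,0,\pm h)\) with \(h^2=\lambda^2-\beta^2/3\). This requires \(\beta<\lambda\sqrt3\), and it gives six faces \((\lambda,\lambda,\beta)\), so \(m(T)\le 6\).

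\textbf{Lower bounds, via Euler.} A polyhedron with \(F\) triangular faces has \(F\) even, \(E=3F/2\) and \(V=F/2+2\). Hence \(F=4\) forces the tetrahedron and \(F=6\) forces the triangular bipyramid, the only simplicial 3-polytope with six faces. Its degree sequence is \((3,3,4,4,4)\).

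\begin{itemize}
\item \emph{Four faces.} At each vertex of the tetrahedron the three face angles must sum to less than \(2\pi\), and each must be less than the sum of the other two. I would show that four congruent faces force the tetrahedron to be a disphenoid: by counting edge lengths, each length occurs on exactly two opposite edges. The formula for \(p^2\) above then shows that \(T\) must be acute. Alternatively, the angle sum at each vertex is \(\pi\), and a vertex carrying an angle \(\ge\pi/2\) violates the triangle inequality on face angles unless the configuration degenerates.
\item \emph{Six faces.} The bipyramid has three vertices of degree 4 forming the equator and two apexes of degree 3.
\end{itemize}

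The main obstacle is the six-face case. I must show that six congruent copies of \(T\) can tile the bipyramid only if \(T\) is isosceles with \(\beta\) at least \(\lambda\sqrt2\), and also rule out acute triangles; the latter is automatic, since acute \(T\) already has \(m=4\).

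My first attempt at this obstacle is a case analysis on the edge-length labelings. Every face must carry the labels \(\{a,b,c\}\), with the three equatorial edges each shared by two faces. One checks that a scalene \(T\) cannot be labeled consistently unless a geometric contradiction appears. Such a contradiction would come from the convexity condition at the apex (angle sum below \(2\pi\)) or from coplanarity failing for an obtuse scalene \(T\). The check would use the law of cosines to compute the dihedral angles along equatorial edges.

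For isosceles \(T\), the labelings reduce to two possibilities: either the base \(\beta\) lies on the equator, or the base edges run apex-to-equator. In the second case I would show that convexity at the equatorial vertices, where four angles meet, forces the base angles to satisfy \(2\alpha+2\gamma<2\pi\) in a way that requires \(\beta<\lambda\sqrt2\), meaning \(T\) is acute. That leaves the first labeling, which is exactly the bipyramid constructed above. The case \(\beta<\lambda\sqrt2\) is acute and already has \(m=4\), while \(\beta\ge\lambda\sqrt3\) makes \(h^2\le 0\), which is impossible. This yields \(m=8\) for all remaining \(T\).
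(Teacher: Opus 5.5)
Your skeleton matches the paper's: a disphenoid for acute $T$, Euler's formula forcing the triangular bipyramid when $F=6$, labelings classified by where the edges of the unique length go, and the ``base on the equator'' labeling giving exactly the bipyramid that exists iff $\beta<\lambda\sqrt3$. The gap is in the step you yourself call the main obstacle: excluding the other labeling for non-acute $T$.

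For isosceles $T$ that labeling is $|AB|=|NC|=|SC|=\beta$, with all other edges of length $\lambda$ (apices $N,S$, equator $ABC$). Your proposed mechanism, convexity at the degree-$4$ equatorial vertices, yields nothing here. Let $\theta$ be the apex angle and $\phi=(\pi-\theta)/2$ the base angle. The face angles are $\phi,\phi,\theta,\theta$ at $A$ and at $B$, and $\phi,\phi,\phi,\phi$ at $C$. Their sums are $\pi+\theta$ and $2\pi-2\theta$, both below $2\pi$ for every triangle, and each angle is less than the sum of the other three. So no bound like $\beta<\lambda\sqrt2$ can come out of this.

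For scalene $T$ you only say ``one checks''. The condition you name, apex angle sum below $2\pi$, is not the binding one. Consistent scalene labelings do exist, for instance $|BC|=|NA|=|SA|=a$, $|CA|=|NB|=|SB|=b$, $|AB|=|NC|=|SC|=c$. In them each apex carries face angles summing to exactly $\pi$.

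The missing idea is the paper's. Suppose the unique-length edges are not the three equatorial edges, so after relabelling they are $AB$, $NC$, $SC$. Matching side lengths in the three faces at $N$ forces $|BC|=|NA|$ and $|CA|=|NB|$. Hence the triangle $ABC$, which is not a face of the polyhedron, is itself congruent to $T$. Since $N$ is off the plane of $ABC$, the tetrahedron $NABC$ has four faces congruent to $T$, and your four-face argument makes $T$ acute.

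An equivalent route is closer to your own ``alternatively'' argument in the four-face case. The degree-$3$ apex $N$ carries one copy of each angle of $T$. Each face angle of a convex trihedral angle is strictly less than the sum of the other two, which forces every angle of $T$ below $\pi/2$. The constraint lives at the degree-$3$ apices, not at the equatorial vertices.

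You also need one small observation for scalene $T$: the edges of one length cannot be exactly the three equatorial edges. If they were, the lateral edges at $N$ would have to properly $2$-colour a triangle with the two remaining lengths, which is impossible, so scalene $T$ always lands in the case above. With that step supplied, the rest of your plan goes through, including $h^2\le0$ for $\beta\ge\lambda\sqrt3$ and Theorem~\ref{thm:universal} for the bound $8$.
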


\section{An eight-face construction}\label{sec:eight}

Let the side lengths of the given triangle be ordered as
\[
  0<a\leq b\leq c,
  \qquad a+b>c,
\]
and put
\[
  M=\frac{b^{2}+c^{2}-a^{2}}{2}.
\]
Thus \(M=bc\cos\alpha\), where \(\alpha\) is the angle between the sides of
lengths \(b\) and \(c\).  Since \(a\) is the shortest side, \(\alpha\) is the
smallest angle of the triangle and is therefore acute, so
\begin{equation}\label{eq:Mrange}
  0<M<bc .
\end{equation}
Both inequalities are also immediate algebraically: \(b^{2}+c^{2}-a^{2}\geq
b^{2}>0\) because \(a\leq c\), while \(M<bc\) is equivalent to
\((c-b)^{2}<a^{2}\), which follows from \(c-b<a\).

Choose \(X\in(0,b^{2})\) so that
\begin{equation}\label{eq:X}
  (b^{2}-X)(c^{2}-X)=M^{2}.
\end{equation}
Such an \(X\) exists and is unique.  Writing \(f(X)=(b^{2}-X)(c^{2}-X)\), we
have \(f'(X)=2X-b^{2}-c^{2}\leq 0\) on \([0,b^{2}]\), with equality only at the
right endpoint and only when \(b=c\); so \(f\) is continuous and strictly
decreasing there, and it takes the values \(f(0)=b^{2}c^{2}>M^{2}\) and
\(f(b^{2})=0<M^{2}\) at the endpoints, the first by~\eqref{eq:Mrange}.
Explicitly,
\[
  X=\frac{b^{2}+c^{2}
  -\sqrt{(c^{2}-b^{2})^{2}+4M^{2}}}{2}.
\]
Define
\[
  r=\sqrt{X},\qquad
  d=\sqrt{b^{2}-X},\qquad
  s=\sqrt{c^{2}-X},
\]
so that \(r>0\), \(0<d\leq s\), and, by \eqref{eq:X}, \(ds=M\).  Finally set
\[
  h=\frac{s+d}{2},
  \qquad
  t=\frac{s-d}{2},
\]
so that \(h>0\), \(t\geq0\), \(h-t=d\), and \(h+t=s\).

Consider the following six points in \(\mathbb{R}^{3}\):
\begin{align*}
  U&=(0,0,h),          & V&=(0,0,-h),\\
  A&=(r,0,t),          & B&=(0,r,-t),\\
  C&=(-r,0,t),         & D&=(0,-r,-t).
\end{align*}
Geometrically, \(A,B,C,D\) lie on a circle of radius \(r\) about the
\(z\)-axis, alternating between the heights \(t\) and \(-t\), and \(U,V\) are
apices at heights \(\pm h\).  Let
\[
  P=\operatorname{conv}\{U,V,A,B,C,D\}.
\]
We claim that its facets are exactly
\begin{equation}\label{eq:facets}
  UAB,\ UBC,\ UCD,\ UDA,
  \qquad
  VAB,\ VBC,\ VCD,\ VDA;
\end{equation}
see Figure~\ref{fig:P}.

\begin{figure}[ht]
\centering
\begin{tikzpicture}[scale=1.55,
  vis/.style={line width=0.7pt},
  hid/.style={line width=0.4pt,densely dashed,gray},
  vx/.style={circle,fill=black,inner sep=1.1pt}]
  \coordinate (U) at (0.000,2.247);
  \coordinate (V) at (0.000,-2.247);
  \coordinate (A) at (-0.404,-0.106);
  \coordinate (B) at (1.110,-0.461);
  \coordinate (C) at (0.404,0.726);
  \coordinate (D) at (-1.110,-0.159);
  \draw[hid] (B) -- (C);
  \draw[hid] (C) -- (D);
  \draw[hid] (C) -- (U);
  \draw[hid] (C) -- (V);
  \draw[vis] (A) -- (B);
  \draw[vis] (A) -- (D);
  \draw[vis] (A) -- (U);
  \draw[vis] (A) -- (V);
  \draw[vis] (B) -- (U);
  \draw[vis] (B) -- (V);
  \draw[vis] (D) -- (U);
  \draw[vis] (D) -- (V);
  \foreach \p in {U,V,A,B,C,D} \node[vx] at (\p) {};
  \node[above]      at (U) {$U$};
  \node[below]      at (V) {$V$};
  \node[below,yshift=-1pt] at (A) {$A$};
  \node[right]      at (B) {$B$};
  \node[above right]at (C) {$C$};
  \node[left]       at (D) {$D$};
\end{tikzpicture}
\caption{The polyhedron \(P\) for the triangle with sides \((a,b,c)=(3,4,5)\).
The equatorial four-cycle \(ABCD\) alternates between heights \(t\) and \(-t\);
hidden edges are dashed.  All eight faces are congruent to the given
triangle.}\label{fig:P}
\end{figure}

\subsection*{The faces are congruent to \(T\)}

Since \(h-t=d\) and \(h+t=s\),
\begin{align*}
  |UA|^{2}&=r^{2}+d^{2}=X+(b^{2}-X)=b^{2},\\
  |UB|^{2}&=r^{2}+s^{2}=X+(c^{2}-X)=c^{2}.
\end{align*}
Moreover,
\begin{align*}
  |AB|^{2}
    &=2r^{2}+(2t)^{2}
     =2X+(s-d)^{2}\\
    &=2X+(c^{2}-X)+(b^{2}-X)-2ds
     =b^{2}+c^{2}-2M=a^{2}.
\end{align*}
Thus \(UAB\) has side lengths \(b,c,a\).

The point configuration \(\{U,V,A,B,C,D\}\) is invariant under the two
isometries
\begin{equation}\label{eq:gens}
  \sigma(x,y,z)=(-y,\,x,\,-z),
  \qquad
  \rho(x,y,z)=(y,\,x,\,-z).
\end{equation}
Indeed \(\sigma\) acts on it by \(U\mapsto V\mapsto U\) and
\(A\mapsto B\mapsto C\mapsto D\mapsto A\), while \(\rho\) acts by
\(U\leftrightarrow V\), \(A\leftrightarrow B\), \(C\leftrightarrow D\).  Here
\(\sigma\) is the improper rotation \(\bar4\) about the \(z\)-axis and \(\rho\)
is a half-turn about the line \(y=x\) in the plane \(z=0\); together they
generate a group of order eight, the point group \(D_{2d}=\bar4 2m\)
\cite{IUCr}.  The \(\sigma\)-orbit of \(UAB\) is \(\{UAB,VBC,UCD,VDA\}\) and
the \(\sigma\)-orbit of \(\rho(UAB)=VAB\) is \(\{VAB,UBC,VCD,UDA\}\), so the
group acts transitively on the eight triangles in \eqref{eq:facets}.  All of
them are therefore congruent to \(T\).

\subsection*{They are exactly the facets}

For the plane through \(U,A,B\), take the normal
\[
  n=(A-U)\times(B-U)=(r,0,-d)\times(0,r,-s)=(dr,\,sr,\,r^{2}).
\]
Since \(r>0\) we have \(n\neq0\), so \(U,A,B\) are affinely independent.  The
remaining three points satisfy
\begin{align*}
  n\mathbin{\cdot}(V-U)&=-(s+d)r^{2}<0,\\
  n\mathbin{\cdot}(C-U)&=-2dr^{2}<0,\\
  n\mathbin{\cdot}(D-U)&=-2sr^{2}<0,
\end{align*}
all strictly negative.  In particular \(V\notin\operatorname{aff}\{U,A,B\}\),
so \(P\) is three-dimensional.  Moreover the plane
\(\{x:n\cdot(x-U)=0\}\) supports \(P\) and meets \(\{U,V,A,B,C,D\}\) in
\emph{exactly} \(\{U,A,B\}\); hence the corresponding facet of \(P\) is the
triangle \(UAB\) itself, and not some larger polygon obtained by merging
coplanar triangles.  Applying the symmetries \eqref{eq:gens} gives the same
conclusion for all eight triangles in \eqref{eq:facets}, and these eight
triangles are pairwise distinct.

Finally, \(P\) has at most six vertices, and a convex three-polytope with
\(v\) vertices has at most \(2v-4\) facets.  Hence \(P\) has at most
\(2\cdot6-4=8\) facets.  Since we have exhibited eight, these are all of them,
and all six points are indeed vertices.  This proves
Theorem~\ref{thm:universal}. \qed

\begin{remark}[Reflections are genuinely needed]\label{rem:chirality}
Let \(T\) be scalene.  The reflection \(\mu(x,y,z)=(-x,y,z)\) is a symmetry of
the configuration, fixing \(U,B,D\) and interchanging \(A\) and \(C\); it
carries the facet \(UAB\) to the facet \(UBC\).  Being orientation-reversing,
it reverses the cyclic order in which the side lengths \(a,b,c\) are read off
around a facet with respect to the outward normal.  Since \(a,b,c\) are
distinct, a cyclic word in them differs from its reverse, so \(UAB\) and
\(UBC\) carry opposite handedness.  The subgroup of orientation-preserving
symmetries, \(\{1,\sigma^{2},\rho,\sigma^{2}\rho\}\), has the two orbits
\(\{UAB,UCD,VAB,VCD\}\) and \(\{UBC,UDA,VBC,VDA\}\); the eight facets thus
split into four direct and four mirror-image copies of \(T\).  The
construction therefore does not bear on the orientation-preserving version of
Problem B22.
\end{remark}

\begin{remark}[Symmetric specializations]
If \(b=c\) then \(d=s\) and \(t=0\), and \(P\) is a square bipyramid over the
equatorial square \(ABCD\).  If \(a=b=c\) then \(X=\tfrac12 a^{2}\) and
\(r=h=a/\sqrt2\), and \(P\) is the regular octahedron.
\end{remark}

\section{The minimum face count}

Throughout this section \(Q\) denotes a convex polyhedron with \(F\) faces,
each congruent to a given nondegenerate triangle \(T\).

\subsection*{Parity}

Suppose \(T\) is not equilateral, and let \(\beta\) be a side length occurring
exactly once among the three sides of \(T\); let \(E_{\beta}\) be the number of
edges of \(Q\) of length \(\beta\).  Every face of \(Q\) is a triangle, and in
a convex polytope no vertex lies in the relative interior of a side of a
facet, so each side of each face is a single edge of \(Q\).  Each face
therefore has exactly one side of length \(\beta\).  Counting incidences
between faces and edges of length \(\beta\) in two ways, and using that every
edge of a convex polyhedron lies on exactly two faces, gives
\begin{equation}\label{eq:parity}
  F=2E_{\beta}.
\end{equation}
Hence \(F\) is even whenever \(T\) is not equilateral.

\needspace{6\baselineskip}
\begin{lemma}[The four-face case]\label{lem:tetrahedron}
A triangle is the common face of a tetrahedron if and only if it is acute.
\end{lemma}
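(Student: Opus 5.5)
We prove the two directions separately. For sufficiency we build the disphenoid explicitly. For necessity we use a face count to force the combinatorics of the tetrahedron, then show an obtuse or right face would push some face angle at a vertex to exceed the other two together.

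\emph{Sufficiency.} Let \(T\) be acute with sides \(a,b,c\). We look for a box \([0,p]\times[0,q]\times[0,r]\) whose face diagonals are \(a,b,c\), that is,
\[
  q^{2}+r^{2}=a^{2},\qquad p^{2}+r^{2}=b^{2},\qquad p^{2}+q^{2}=c^{2}.
\]
Solving gives \(p^{2}=\tfrac12(b^{2}+c^{2}-a^{2})\), and cyclically for \(q^{2},r^{2}\). All three are positive precisely because \(T\) is acute. Take four alternate vertices of the box, for instance \((0,0,0),(p,q,0),(p,0,r),(0,q,r)\). Each pair of opposite edges of this tetrahedron is a pair of opposite face diagonals of the box, so opposite edges have equal length \(a\), \(b\) or \(c\). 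Every face then uses one edge from each pair, so every face has side lengths \(a,b,c\) and is congruent to \(T\). The four points are affinely independent because \(p,q,r>0\).

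\emph{Necessity, combinatorics.} Suppose a tetrahedron has all four faces congruent to \(T\), with sides \(a,b,c\). Each face contains exactly one side of each length, counted with multiplicity when lengths coincide. So the multiset of the six edge lengths, each counted twice (once per incident face), is four copies of \(\{a,b,c\}\). Hence the edge lengths are two \(a\)'s, two \(b\)'s and two \(c\)'s. Opposite edges must have equal length. Indeed, if two edges of length \(a\) shared a vertex, the third side of the face they span, or some other face, would have two sides of length \(a\). When \(a,b,c\) are distinct, this case check is short. When \(T\) is isosceles we instead argue via the angle sum, as follows.

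\emph{Necessity, angles.} Opposite edges being equal, the three faces at any vertex carry the three angles \(\alpha,\beta,\gamma\) of \(T\), one each. At a vertex of a convex polyhedron each face angle is strictly less than the sum of the other two. So \(\gamma<\alpha+\beta=\pi-\gamma\), giving \(\gamma<\pi/2\) for every angle, and \(T\) is acute.

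For the general case, including isosceles and equilateral \(T\), it is cleaner to prove directly that the angles at each vertex are \(\alpha,\beta,\gamma\). The total angle sum over the four vertices is \(4\pi\). Each face contributes each of \(\alpha,\beta,\gamma\) once. I would try to show that any other assignment forces some vertex to carry two copies of the largest angle \(\gamma\ge\pi/2\), contradicting the strict inequality \(\text{sum at a vertex}<2\pi\) together with the triangle inequality for face angles.

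\emph{Main obstacle.} The main obstacle is this combinatorial bookkeeping in the isosceles or right case. The fallback is to use the Cayley--Menger determinant: a tetrahedron with opposite edges \(a,b,c\) has squared volume proportional to \((b^{2}+c^{2}-a^{2})(a^{2}+c^{2}-b^{2})(a^{2}+b^{2}-c^{2})\). This is positive only for acute \(T\), so in the right or obtuse case the figure degenerates.
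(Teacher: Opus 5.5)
Your sufficiency construction is correct; the box \([0,p]\times[0,q]\times[0,r]\) with \(p^{2}=\tfrac12(b^{2}+c^{2}-a^{2})\) gives the paper's disphenoid up to a rigid motion. For scalene \(T\) your necessity argument is also complete. The two edges of each length cannot share a vertex, so opposite edges are equal and each vertex carries \(\alpha,\beta,\gamma\) once each. The strict triangle inequality for the face angles of a trihedral angle then gives \(\gamma<\pi/2\). That angle argument is a genuinely different, coordinate-free route from the paper's. The paper instead places the disphenoid at alternate vertices of a box and reads acuteness off the positivity of \(x^{2},y^{2},z^{2}\).

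The gap is the isosceles, non-acute case \((\lambda,\lambda,\beta)\) with \(\beta\geq\lambda\sqrt2\). This is exactly the case the lemma is later needed for. Your ``angles'' paragraph opens with ``Opposite edges being equal'', so it cannot replace opposite-edge equality for isosceles \(T\); for that case the reasoning is circular. The subsequent plan (``any other assignment forces some vertex to carry two copies of \(\gamma\)'') is never carried out. The Cayley--Menger fallback also presupposes that the tetrahedron has opposite edges \(a,b,c\). (Separately, for obtuse \(T\) that product is negative, so no configuration exists at all, rather than a degenerate one.) As written, nothing excludes a tetrahedron with all faces \((\lambda,\lambda,\beta)\), \(\beta\geq\lambda\sqrt2\).

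The missing step is short, and it is what the paper does: apply your vertex-sharing argument only to the length \(\beta\) that occurs once in \(T\).
\begin{itemize}
\item Your incidence count gives exactly two edges of length \(\beta\).
\item If they shared a vertex, the face containing both would have two sides of length \(\beta\), which is impossible. So they are opposite.
\item The remaining four edges all have length \(\lambda\), so the other two opposite pairs are automatically equal.
\end{itemize}
The equilateral case is trivial. With this in place, your angle inequality (or the Cayley--Menger volume) finishes necessity in every case. So the isosceles case you flagged as the main obstacle is in fact no harder than the scalene one.
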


\begin{proof}
Suppose all four faces of a tetrahedron are congruent to \(T\).  Let
\(\ell_{1},\dots,\ell_{k}\) be the distinct side lengths of \(T\), occurring
with multiplicities \(m_{1},\dots,m_{k}\), where \(k\in\{1,2,3\}\) and
\(\sum_{i}m_{i}=3\).  Each face has exactly \(m_{i}\) sides of length
\(\ell_{i}\), so counting incidences as above gives \(4m_{i}=2E_{\ell_{i}}\),
that is, \(E_{\ell_{i}}=2m_{i}\).

We claim that opposite edges are equal.  Note first that two edges of a
tetrahedron sharing a vertex lie in a common face.

If \(k=1\), all six edges are equal and the claim is trivial.  If \(k=2\), say
\(T\) has side lengths \((\lambda,\lambda,\beta)\) with \(\beta\neq\lambda\),
then \(E_{\beta}=2\); the two edges of length \(\beta\) cannot share a vertex,
since the common face would then have two sides of length \(\beta\) while
\(T\) has only one, so they are opposite, and the remaining four edges, all of
length \(\lambda\), constitute the other two opposite pairs.  If \(k=3\), then
\(E_{\ell_{i}}=2\) for each \(i\) and no two equal edges are adjacent, so each
of the three equal pairs is a pair of opposite edges.

The tetrahedron is therefore a disphenoid, and may be written, up to rigid
motion, with vertices
\[
  (x,y,z),\quad (x,-y,-z),\quad
  (-x,y,-z),\quad (-x,-y,z),
\]
these being alternate vertices of the box \([-x,x]\times[-y,y]\times[-z,z]\).
Its three pairs of opposite edges have squared lengths \(4(y^{2}+z^{2})\),
\(4(x^{2}+z^{2})\), and \(4(x^{2}+y^{2})\).  Setting these equal to
\(a^{2},b^{2},c^{2}\), the squared side lengths of \(T\), and solving gives
\begin{equation}\label{eq:disphenoid}
  x^{2}=\frac{b^{2}+c^{2}-a^{2}}{8},\qquad
  y^{2}=\frac{c^{2}+a^{2}-b^{2}}{8},\qquad
  z^{2}=\frac{a^{2}+b^{2}-c^{2}}{8}.
\end{equation}
The four points are affinely independent exactly when \(x,y,z\) are all
nonzero, that is, when all three right-hand sides are positive; and that is
precisely the condition that \(T\) be acute.

Conversely, if \(T\) is acute then \eqref{eq:disphenoid} defines a genuine
tetrahedron, and each of its faces contains exactly one edge from each
opposite pair, hence has side lengths \(a,b,c\).
\end{proof}

Lemma~\ref{lem:tetrahedron} is the classical characterization of the isosceles
tetrahedron, or disphenoid; see Leech \cite{Leech}.

\begin{lemma}[The six-face case]\label{lem:six}
Let \(T\) be the common face of a convex polyhedron with six faces.  Then
either \(T\) is acute, or \(T\) is isosceles with side lengths
\((\lambda,\lambda,\beta)\), \(\beta\neq\lambda\), and \(\beta<\lambda\sqrt3\).
Conversely, every isosceles triangle with \(\beta<\lambda\sqrt3\) is the common
face of a convex polyhedron with six faces.
\end{lemma}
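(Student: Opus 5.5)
Counting first: a convex polyhedron with six triangular faces has \(E=9\) edges, and Euler's formula gives \(V=5\). The only convex polyhedron with five vertices and six triangular faces is combinatorially the triangular bipyramid. I would check this directly: the vertex degrees sum to \(18\), each degree is at least \(3\), and the only simplicial realization is degrees \((3,3,4,4,4)\). So I write \(Q\) with apices \(U,V\) of degree \(3\) and equator \(ABC\). Its faces are \(UAB,UBC,UCA,VAB,VBC,VCA\), and \(UV\) is not an edge.

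\emph{Converse.} This is the easy part, so I would do it first. Given \((\lambda,\lambda,\beta)\) with \(\beta<\lambda\sqrt3\), take \(ABC\) equilateral of side \(\beta\), with circumradius \(\beta/\sqrt3<\lambda\). Put \(U,V\) on the axis at heights \(\pm\sqrt{\lambda^{2}-\beta^{2}/3}>0\). The convex hull is a bipyramid whose six faces are all \((\lambda,\lambda,\beta)\). Each face plane supports the hull and meets the vertex set in exactly three points, as in Section~\ref{sec:eight}, so no two faces merge.

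\emph{Necessity.} Assume \(T\) is not acute; in particular it is not equilateral. The parity count \eqref{eq:parity} with \(F=6\) gives \(E_\beta=3\) for every length \(\beta\) occurring exactly once. I then enumerate the edge labelings of the bipyramid in which every face sees each length with the multiplicity prescribed by \(T\).

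In the isosceles case the three \(\beta\)-edges meet every face exactly once. Propagating from one edge as in Lemma~\ref{lem:tetrahedron} leaves two patterns up to symmetry.
\begin{itemize}
\item[(i)] The three \(\beta\)-edges are \(AB,BC,CA\). Then \(U\) is at distance \(\lambda\) from each vertex of an equilateral triangle of side \(\beta\), which forces \(\beta<\lambda\sqrt3\).
\item[(ii)] The three \(\beta\)-edges are \(UA,VA,BC\), or a symmetric copy. Then \(B\) and \(C\) are both at distance \(\lambda\) from \(U,A,V\), so they are mirror images across the plane \(UAV\), and \(|BC|=\beta\).
\end{itemize}

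In the scalene case \(E_a=E_b=E_c=3\). I would list the labelings by propagating from the labels around one apex, and get a short list again.

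For each non-equilateral-equator pattern, I must show that convexity forces \(T\) to be acute. That means either finding no valid realization, or showing that any realization is convex only when every angle of \(T\) is below \(\pi/2\).

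\emph{Main obstacle.} The hard step is exactly this convexity analysis for the exotic labelings, and above all for the scalene ones. My first tool is the trihedral condition at each degree-3 apex: the three face angles at \(U\) sum to less than \(2\pi\), and each is less than the sum of the other two. I also use the convexity requirement that the segment \(UV\) pierce the triangle \(ABC\), equivalently that \(U\) and \(V\) lie on opposite sides of the plane \(ABC\) with the dihedral angles at the equator below \(\pi\).

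To make this concrete, I would place coordinates as in the disphenoid proof, using the reflection symmetry in pattern (ii). I would solve for \(U,V\) and check the sign of the relevant determinants. The expected outcome is that a right or obtuse angle of \(T\) at the apex, or at an equatorial vertex, makes some adjacent dihedral angle at least \(\pi\). If the direct computation gets out of hand, the fallback is to observe that in these patterns the four faces around some degree-4 vertex pair up into two tetrahedra with congruent opposite edges. Lemma~\ref{lem:tetrahedron} then forces acuteness.
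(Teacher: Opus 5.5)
\textbf{There is a genuine gap: the necessity direction is not proved.} Your edge count, the reduction to the triangular bipyramid, the converse, and pattern~(i) are fine. But the necessity direction is the substance of the lemma, and it is missing.

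\begin{itemize}
\item You never enumerate the scalene labelings.
\item For every labeling other than the equatorial one, you only announce that convexity should force acuteness, and you leave this as your ``main obstacle.''
\item Pattern~(ii) ends without a conclusion. \(|BC|=\beta\) is part of the labeling you started from, and you never say why the configuration forces \(T\) to be acute.
\end{itemize}

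The mechanism you anticipate is also not the one at work. You expect trihedral inequalities at the apices, or an equatorial dihedral angle reaching \(\pi\). In fact, for right or obtuse \(T\) these labelings cannot be realized with an apex off the plane of \(ABC\) at all, convex or not. The obstruction is metric, not a matter of convexity.

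\textbf{The missing idea.} Your fallback only gestures at it: look at the tetrahedron formed by one apex and the \emph{equatorial} triangle. Its fourth side \(ABC\) is not a face of \(Q\). Let \(\beta\) be a length occurring once in \(T\), and let \(m,m'\) be the other two sides.

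\emph{Equatorial labeling.} If the \(\beta\)-edges are \(AB,BC,CA\), the three faces at \(U\) give \(\{|UA|,|UB|\}=\{|UB|,|UC|\}=\{|UC|,|UA|\}=\{m,m'\}\). This is impossible unless \(m=m'\). So \(T\) is isosceles and your pattern~(i) applies; this also rules out this labeling for scalene \(T\).

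\emph{Remaining labeling.} Otherwise, up to symmetry, the \(\beta\)-edges are \(UA,VA,BC\), for isosceles and scalene \(T\) alike. The faces \(UAB\), \(UBC\), \(UCA\) then give \(\{|UB|,|AB|\}=\{|UB|,|UC|\}=\{|UC|,|CA|\}=\{m,m'\}\). Hence \(|AB|=|UC|\) and \(|CA|=|UB|\), so the triangle \(ABC\), with sides \(|UC|,\beta,|UB|\), is congruent to \(T\). Since \(UAB,UBC,UCA\) are distinct facets, \(U\) is off the plane of \(ABC\). Thus \(UABC\) is a genuine tetrahedron all four of whose faces are congruent to \(T\), and Lemma~\ref{lem:tetrahedron} makes \(T\) acute.

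This is the paper's Case~(II). With it, you need neither a scalene enumeration nor any dihedral-angle analysis.
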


\begin{proof}
If \(T\) is equilateral it is acute and there is nothing to prove, so assume
not, and let \(\beta\) be the side length of \(T\) occurring exactly once.

Since every face of \(Q\) is a triangle, \(3F=2e\), where \(e\) is the number
of edges; with \(F=6\) this gives \(e=9\), and Euler's formula gives \(v=5\)
vertices.  The
only simplicial convex three-polytope with five vertices is the triangular
bipyramid; denote its apices by \(N,S\) and its equatorial vertices by
\(A,B,C\).

Mark the edges of \(Q\) of length \(\beta\).  The dual graph of the bipyramid
is the triangular prism: its vertices are the six faces of \(Q\) and its edges
are the nine edges of \(Q\), the three equatorial edges of \(Q\) corresponding
to the three prism edges that join the two triangles, and the six lateral
edges of \(Q\) to the edges lying within those triangles.  Each face of \(Q\)
contains exactly one marked edge, so the marked edges form a perfect matching
of the prism.  The prism has exactly four perfect matchings, falling into two
orbits of sizes one and three under its automorphism group:
\begin{enumerate}
\item[(I)] the three edges joining the two triangles---that is, the three
  equatorial edges \(AB,BC,CA\) of \(Q\);
\item[(II)] one such edge together with one edge from each triangle---after
  relabelling, the edges \(AB\), \(NC\), and \(SC\) of \(Q\).
\end{enumerate}

\emph{Case} (I).  Here \(|AB|=|BC|=|CA|=\beta\), and every lateral edge carries
one of the two remaining side lengths \(m,m'\) of \(T\).  The faces \(NAB\),
\(NBC\), \(NCA\) force
\[
  \{|NA|,|NB|\}=\{|NB|,|NC|\}=\{|NC|,|NA|\}=\{m,m'\}.
\]
If \(m\neq m'\) this is a proper two-colouring of a triangle, which is
impossible; hence \(m=m'=\lambda\) and \(T\) has side lengths
\((\lambda,\lambda,\beta)\).  All six lateral edges then have length
\(\lambda\), so both apices are equidistant from \(A,B,C\) and lie on the axis
of the equilateral triangle \(ABC\), whose side is \(\beta\) and whose
circumradius is \(\beta/\sqrt3\).  An apex lies off the plane of \(ABC\)
precisely when \(\lambda>\beta/\sqrt3\), that is, when \(\beta<\lambda\sqrt3\).

\emph{Case} (II).  The face \(NAB\), with sides \(|NA|,|NB|,|AB|=\beta\), gives
\(\{|NA|,|NB|\}=\{m,m'\}\) as multisets.  The face \(NBC\), with sides
\(|NB|,|NC|=\beta,|BC|\), gives \(\{|NB|,|BC|\}=\{m,m'\}\), whence
\(|BC|=|NA|\); similarly the face \(NCA\) gives \(|CA|=|NB|\).  The triangle
\(ABC\) therefore has side lengths \(\beta,|NA|,|NB|\) and is congruent to
\(T\).  Since \(N\) does not lie in the plane of \(ABC\), the tetrahedron
\(NABC\) is nondegenerate, and all four of its faces---\(NAB\), \(NBC\),
\(NCA\), and \(ABC\)---are congruent to \(T\).  By
Lemma~\ref{lem:tetrahedron}, \(T\) is acute.

For the converse, let \(T\) have side lengths \((\lambda,\lambda,\beta)\) with
\(\beta<\lambda\sqrt3\).  Place an equilateral triangle \(ABC\) of side
\(\beta\) in the plane \(z=0\), centred at the origin, so that its circumradius
is \(R=\beta/\sqrt3<\lambda\), and set \(N=(0,0,\eta)\), \(S=(0,0,-\eta)\) with
\(\eta=\sqrt{\lambda^{2}-R^{2}}>0\).  The segment \(NS\) meets the interior of
\(ABC\), so \(\operatorname{conv}\{N,S,A,B,C\}\) is a triangular bipyramid with
five vertices and six faces, each of side lengths
\(\lambda,\lambda,\beta\).
\end{proof}

\begin{proof}[Proof of Theorem~\ref{thm:minimum}]
If \(T\) is acute, Lemma~\ref{lem:tetrahedron} realizes it on a tetrahedron,
and no polyhedron has fewer than four faces; hence \(m(T)=4\).

Suppose \(T\) is right or obtuse.  Then \(T\) is not equilateral, so \(F\) is
even by \eqref{eq:parity}, and \(F\neq4\) by Lemma~\ref{lem:tetrahedron};
hence \(F\geq6\).

If \(T\) is isosceles with side lengths \((\lambda,\lambda,\beta)\), then
\(\beta\geq\lambda\sqrt2\), since \(T\) is not acute.  When
\(\beta<\lambda\sqrt3\), the converse part of Lemma~\ref{lem:six} attains
\(F=6\), so \(m(T)=6\).  When \(\beta\geq\lambda\sqrt3\), the direct part of
Lemma~\ref{lem:six} rules out \(F=6\), since \(T\) is neither acute nor
isosceles with \(\beta<\lambda\sqrt3\); parity then gives \(F\geq8\), and
Theorem~\ref{thm:universal} attains \(F=8\), so \(m(T)=8\).

If instead \(T\) is scalene, Lemma~\ref{lem:six} again rules out \(F=6\), and
the same argument gives \(m(T)=8\).
\end{proof}

\begin{remark}
The two ends of the middle range are worth recording.  The right isosceles
triangle \((1,1,\sqrt2)\) has \(m(T)=6\), realized by the bipyramid over an
equilateral triangle of side \(\sqrt2\) with apices at height \(1/\sqrt3\).  At
the other end \(\beta=\lambda\sqrt3\) gives \(\eta=0\): the two apices collapse
onto the centre of \(ABC\), the bipyramid degenerates, and \(m(T)\) jumps to
eight.  The bound \(\beta<\lambda\sqrt3\) is therefore strict.
\end{remark}

\section{Concluding remarks}

The polyhedron \(P\) has the combinatorics of an octahedron: \(U\) and \(V\)
are opposite vertices and \(A,B,C,D\) form an equatorial four-cycle.  With its
\(D_{2d}\) symmetry it is a tetragonal scalenohedron \cite{IUCr}, a shape long
familiar in crystallography, and no priority is claimed for the family itself.
The contribution intended here is the observation that the family is
\emph{universal}---its two shape parameters can be solved in closed form for
an arbitrary prescribed triangle---together with the exact minimum face count
of Theorem~\ref{thm:minimum}.

Monohedral convex polyhedra with isosceles faces have been studied in their
own right; see Malkevitch \cite{Malkevitch} and Eppstein \cite{Eppstein}, the
latter surveying the known infinite families and settling several questions
about realizability with isosceles triangles.  The bipyramid appearing in
Lemma~\ref{lem:six} belongs to the most elementary of those families.

For the triangular case, the principal remaining question is the
orientation-preserving version, in which reflected copies are forbidden.  By
Remark~\ref{rem:chirality} the construction of Section~\ref{sec:eight} uses
four direct and four mirror-image copies of a scalene triangle, so it cannot
address that version.

For acute triangles, however, the orientation-preserving version is already
settled by Lemma~\ref{lem:tetrahedron}.  The half-turns about the three
coordinate axes are symmetries of the disphenoid
\eqref{eq:disphenoid}, and they form a group isomorphic to
\(\mathbb{Z}_{2}\times\mathbb{Z}_{2}\) acting simply transitively on its four
faces.  Being rotations, they preserve outward normals, so all four faces
induce the same cyclic order of side lengths with respect to the outward
normal: no reflected copy is used.  Hence the orientation-preserving minimum
for an acute triangle is also four.

Moreover, for an isosceles triangle the distinction between direct and
reflected copies disappears: such a triangle has a reflection symmetry
\(\tau\), so composing any orientation-reversing congruence with \(\tau\)
yields an orientation-preserving congruence with the same image.  The
constructions above therefore settle the orientation-preserving problem for
every isosceles triangle as well, including the right and obtuse ones.  Among
triangles, then, the orientation-preserving question remains open only for
right and obtuse \emph{scalene} triangles, and by Remark~\ref{rem:chirality}
the eight-face construction does not settle those.

Several further parts of Problem B22 are also left
open here: the classification of \emph{all} attainable face counts, as opposed
to the minimum; the description of the possible arrangements; and the
analogous problem for congruent \(n\)-gons.

\section*{Acknowledgments}

The author used OpenAI’s ChatGPT 5.6 Sol to assist with the initial discovery of the result and drafting of the proof, and Anthropic’s Claude Opus 5 for subsequent review and refinement. The author independently verified the final arguments and assumes responsibility for their correctness.

\end{document}